\renewcommand*{\baselinestretch}{1.25}
\newtheorem{theorem}{Theorem}[section]
\newtheorem{lemma}{Lemma}[section]
\theoremstyle{definition}
\newtheorem{definition}{Definition}[section]
\newtheorem*{rmk*}{Remark}
\newtheorem{rmk}{Remark}[section]
\newtheorem{example}{Example}[section]
\newtheorem{assumption}{Assumption}
\numberwithin{equation}{section}
    \renewcommand*{\section}{\@startsection{section}{1}{\z@}%
    {10pt}{5pt}{\reset@font\normalsize\bfseries}}
    \renewcommand*{\subsection}{\@startsection{subsection}{2}{\z@}%
    {5pt}{5pt}{\reset@font\normalsize\mdseries\itshape}}
    \renewcommand*{\subsubsection}{\@startsection{subsubsection}{3}{\z@}%
    {5pt}{5pt}{\reset@font\normalsize\mdseries\itshape}}
\def\@seccntformat#1{\csname the#1\endcsname.\quad}
\def\@listi{\leftmargin\leftmargini
  \topsep=.5\baselineskip 
  \partopsep=0pt \parsep=0pt \itemsep=0pt}
\let\@listI\@listi
\def\@listii{\leftmargin\leftmarginii
  \labelwidth\leftmarginii \advance\labelwidth-\labelsep
  \topsep=0pt \partopsep=0pt \parsep=0pt \itemsep=0pt}
\def\@listiii{\leftmargin\leftmarginiii
  \labelwidth\leftmarginiii \advance\labelwidth-\labelsep
  \topsep=0pt \partopsep=0pt \parsep=0pt \itemsep=0pt}
\def\@listiv{\leftmargin\leftmarginiv
  \labelwidth\leftmarginiv \advance\labelwidth-\labelsep
  \topsep=0pt \partopsep=0pt \parsep=0pt \itemsep=0pt}
\newcommand{\bs}[1]{\boldsymbol{#1}}
\begin{document}
\title{Oracle inequalities for sign constrained generalized linear models}
\date{\today}
\author{Yuta Koike
\thanks{Graduate School of Mathematical Sciences, The University of Tokyo, 3-8-1 Komaba, Meguro-ku, Tokyo 153-8914 Japan}
\thanks{Department of Business Administration, Graduate School of Social Sciences, Tokyo Metropolitan University, Marunouchi Eiraku Bldg. 18F, 1-4-1 Marunouchi, Chiyoda-ku, Tokyo 100-0005 Japan}
\thanks{The Institute of Statistical Mathematics, 10-3 Midori-cho, Tachikawa, Tokyo 190-8562, Japan}
\thanks{CREST, Japan Science and Technology Agency}
\and 
Yuta Tanoue\footnotemark[3]}

\maketitle

\begin{abstract}
High-dimensional data have recently been analyzed because of data collection technology evolution.
Although many methods have been developed to gain sparse recovery in the past two decades, most of these methods require selection of tuning parameters. As a consequence of this feature, results obtained with these methods heavily depend on the tuning. 
In this paper we study the theoretical properties of sign-constrained generalized linear models with convex loss function, which is one of the sparse regression methods without tuning parameters.
Recent studies on this topic have shown that, in the case of linear regression, sign-constrains alone could be as efficient as the oracle method if the design matrix enjoys a suitable assumption in addition to a traditional compatibility condition. We generalize this kind of result to a much more general model which encompasses the logistic and quantile regressions.          
We also perform some numerical experiments to confirm theoretical findings obtained in this paper.
\vspace{3mm}

\noindent \textit{Keywords}: High-dimensions; Oracle inequality; Sign-constraints; Sparsity.

\end{abstract}


\section{Introduction}

Because of data collection technology evolution in recent years, it attracts considerable attentions to analyze high-dimensional data characterized by a large number of explanatory variables as compared to the sample size in many areas such as microarray analysis, text mining, visual recognition and finance.
When one analyzes such high-dimensional data, the key structural property exploited is the sparsity or the near sparsity of the regressors. 
The sparsity assumption, that is a few among huge number of variables only are relevant to response variables, appears realistic for high-dimensional data analysis. This assumption helps us to find crucial variables or reduce noisy irrelevant variables for particular phenomena. 

To construct a sparse estimator with high-dimensional regression, regularization typically plays a crucial role. 
So far, many regularization-based sparse regression methods are proposed, e.g.~Lasso \cite{tibshirani1996regression}, group Lasso \cite{yuan2006model}, adaptive Lasso \cite{zou2006adaptive}, square-root Lasso \cite{belloni2011square}, SCAD \cite{fan2001variable}, MCP \cite{zhang2010nearly}, Bridge regression \cite{frank1993statistical} and so on. 
Among these methods, the Lasso is the most famous one. The Lasso estimator for linear regression is given by the following equation:
\begin{equation}
 \label{eq:lasso}
  \hat{\beta}_{\text{Lasso}} = \mathrm{arg}\min_{\beta \in \mathbb{R}^{p}}\left\{ \|y - X \beta\|^{2}_{2} + \lambda \sum_{j = 1}^{p} |\beta_{j}|\right\},
\end{equation}
where $y$ is the response variable, $X$ is the design matrix and $p$ is the number of explanatory variables.
By adding the penalty term $\lambda \sum_{j = 1}^{p} |\beta_{j}|$, where $\lambda$ is a tuning parameter, to ordinary least square regression, one can get a sparse estimator for $\beta$. 
Although the Lasso has some statistically good properties, these properties depend on the tuning parameter $\lambda$. A typical way of choosing the tuning parameter $\lambda$ is cross-validation, but it takes much computational cost.\footnote{To avoid such a computational burden, some authors have developed analytical methods for choosing the tuning parameter $\lambda$ such as using an information criterion; see \cite{NK2016,USMN2015} and references therein for details.} In addition, the estimation results may depend heavily on the choice of the tuning parameter $\lambda$.

Sign-constraints on the regression coefficients are a simpler constraint-based regression method. Basically, sign-constraints on the regression coefficients are studied in the context of linear least squares, which is referred to as non-negative least squares (NNLS) in the literature. 
We also remark that such a sign-constraint canonically appears in many fields; see e.g.~the Introduction of \cite{SH2013} and references therein. 
The NNLS estimator is obtained as a solution of the following optimization problem:
\begin{equation}
\label{eq:NNLS}
\mathrm{arg}\min_{\beta\in\mathbb{R}^p}\|y-X\beta\|^{2}_{2}\qquad\text{subject to }\min_{1\leq j\leq p}\beta_j\geq0.
\end{equation}
Unlike many regularization-based sparse regression methods, there is no tuning parameter in equation \eqref{eq:NNLS}. 
An early reference of NNLS is \citet{lawson1974solving}. \citet{lawson1974solving} define the NNLS problem as equation \eqref{eq:NNLS} and give a solution algorithm for NNLS.
Rather recently, NNLS in the noiseless setting like \cite{lawson1974solving} is studied by many authors such as  \cite{bruckstein2008uniqueness,donoho2010counting,wang2009conditions,wang2011unique}. 
These authors have studied the uniqueness of the solution of equation \eqref{eq:NNLS} under some conditions. 
What is important in our context is that, under some regularity conditions, they have shown that sign-constraints alone can make estimator sparse without additional regulation terms.
In the meantime, \citet{Meinshausen2013} and \citet{SH2013} have recently studied the NNLS estimator in the setting with Gaussian random errors from a statistical point of view.\footnote{We remark that the statistical property of the NNLS estimator in a fixed-dimensional setting has been extensively studied in the literature from a long ago; see e.g.~\citet{JT1966} and \citet{Liew1976}. More generally, the asymptotic properties of maximum likelihood estimators and M-estimators under general constraints have been developed by several authors such as \citet{LeCam1970} and \citet{Shapiro1989}; see also Section 4.9 of \citet{SS2005} and references therein.} An important conclusion of these studies is that sign-constraints themselves can be as effective as more explicit regularization methods such as the Lasso if the design matrix satisfies a special condition called the \textit{positive eigenvalue condition} (which is called the \textit{self-regularizing property} in \cite{SH2013}) in addition to a compatibility condition, where the latter condition is a standard one to prove the theoretical properties of regularization methods.    
More precisely, \citet{Meinshausen2013} has derived oracle inequalities for the $\ell_{1}$-error of the regression coefficient estimates and the prediction error under the afore-mentioned conditions. In particular, it has been shown that the NNLS estimator can achieve the same convergence rate as that of the Lasso.
\citet{SH2013} have also obtained similar results to those of \cite{Meinshausen2013}, but they have additionally derived oracle inequalities for the $\ell_{q}$-error of the regression coefficient estimates for $q\in[1,2]$ and $q=\infty$ under supplementary assumptions.  


The aim of this paper is to extend the results obtained in \cite{Meinshausen2013,SH2013} to a more general setup allowing general convex loss functions and non-linearity of response variables with respect to explanatory variables. This type of extension has been considered for regularization estimators; for the Lasso in \cite{vdG2007,vdG2008}, for the $\ell_p$-penalization with $p\geq1$ in \cite{Koltchinskii2009} and for the elastic net in \cite{CK2016}; see also Chapter 6 of \cite{BvdG2011}. In this paper we adopt an analogous setting to those articles and derive oracle inequalities for sign-constrained empirical loss minimization, which are parallel to those obtained in the above articles, with assuming the positive eigenvalue condition in addition to a compatibility condition. It would be worth mentioning that bounds of oracle inequalities for regularized estimation are explicitly connected with the regularization parameter (the tuning parameter $\lambda$ for the case of the Lasso), but this is not the case of sign-constrained estimation because it contains no regularization (or tuning) parameter. Interestingly, although our oracle inequalities for sign-constrained estimation contain a parameter controlling the tradeoff between the occurrence probabilities of the inequalities and the sharpness of the bounds of the inequalities as usual for regularized estimation (and this parameter usually corresponds to the regularization parameter), we do not need to specify this parameter when implementing the estimation; the estimation procedure automatically adjusts this parameter appropriately.     

The remainder of this paper is organized as follows. In Section \ref{sec:Setting} we explain a basic setting adopted in this paper. Section \ref{sec:Main result} presents main results obtained in this paper, while in Section \ref{sec:Example} we apply those main results to the case of Lipschitz loss functions, which contains logistic and quantile regressions, and derive concrete oracle inequalities. We complement the theoretical results by conducting numerical experiments in Section \ref{sec:Numerical simulation}. 

\subsection{Notation}

For a set $S$, we denote by $|S|$ the cardinality of $S$. 
For $\beta=(\beta_1,\dots,\beta_p)^\top\in\mathbb{R}^p$ and $S\subset\{1,\dots,p\}$, we define
\[
\beta_{j,S}=\beta_j1_{\{j\in S\}},\qquad j=1,\dots,p
\]
and set $\beta_S=(\beta_{1,S},\dots,\beta_{p,S})^\top$. For $\beta=(\beta_1,\dots,\beta_p)\in\mathbb{R}^p$ and $q>0$, we denote by $\|\beta\|_q$ the $\ell_q$-norm of $\beta$, i.e.
\[
\|\beta\|_q^q=\sum_{j=1}^p|\beta_j|^q.
\]
Also, we write $\beta\succeq0$ if $\beta_j\geq0$ for all $j=1,\dots,p$. 

\section{Setting}
\label{sec:Setting}
We basically follow Section 6.3 of \cite{BvdG2011} (see also \cite{vdG2007,CK2016}). We consider a probability space $(\Omega,\mathcal{F},P)$. Let $\mathbf{F}$ be a real inner product space and $L(f,\omega)$ be a real-valued function on $\mathbf{F}\times\Omega$. $\langle\cdot,\cdot\rangle$ denotes the inner product of $\mathbf{F}$ and $\|\cdot\|$ denotes the norm of $\mathbf{F}$. For every $f\in\mathbf{F}$, we assume that $L(f)=L(f,\cdot)\in L^1(P)$. We regard $\mathbf{F}$ as a (rich) parameter space and $L(f)$ as an empirical loss of $f\in\mathbf{F}$. In a regression setting, $\mathbf{F}$ typically consists of real-valued measurable functions on some measurable space $\mathcal{X}$, and $L(f)$ is typically of the form
\begin{equation}\label{loss}
L(f)=\frac{1}{n}\sum_{i=1}^n\ell(f(X_i),Y_i),
\end{equation}
where $X_1,\dots,X_n$ are random variables taking values in $\mathcal{X}$ (covariates), $Y_1,\dots,Y_n$ are random variables taking values in some Borel subset $\mathcal{Y}$ of $\mathbb{R}$ (response variables), and $\ell:\mathbb{R}\times\mathcal{Y}\to\mathbb{R}$ is a measurable function (loss function).  

We define the expected loss function $\bar{L}:\mathbf{F}\to\mathbb{R}$ by
\[
\bar{L}(f)=E[L(f)],\qquad f\in\mathbf{F}.
\]
Our target quantity is the minimizer $f^0\in\mathbf{F}$ of $\bar{L}(f)$ over $f\in\mathbf{F}$:
\[
\bar{L}(f^0)=\min_{f\in\mathbf{F}}\bar{L}(f)
\]
(we assume that the existence of such an element $f^0$). We then define the excess risk $\mathcal{E}:\mathbf{F}\to[0,\infty)$ by
\[
\mathcal{E}(f)=\bar{L}(f)-\bar{L}(f^0),\qquad f\in\mathbf{F}.
\]
We estimate the target $f^0$ by a linear combination of the elements $\psi_1,\dots,\psi_p$ of $\mathbf{F}$. Namely, our estimator is of the form
\[
f_\beta:=\sum_{j=1}^p\beta_j\psi_j
\]
for some $\beta=(\beta_1,\dots,\beta_p)^\top\in\mathbb{R}^p$. 
\if0
We define the non-negatively constrained minimizer of $\bar{L}(f_\beta)$ with respect to $\beta$ by
\[
\beta^*=\mathrm{arg}\min_{\beta\succeq0}\bar{L}(f_\beta).
\]
Throughout this paper, we implicitly assume that the approximation error $\mathcal{E}(f_{\beta^*})$ is ``small''. 
\fi
We estimate the regression coefficients by a non-negatively constrained minimizer $\hat{\beta}$ of the empirical loss $L(f_\beta)$. Namely, $\hat{\beta}$ is a solution of the following equation:
\[
L(f_{\hat{\beta}})=\min_{\beta\succeq0}L(f_\beta).
\]




\if0
For a function $\rho:\mathcal{Z}\to\mathbb{R}$,
\[
P\rho=\frac{1}{n}\sum_{i=1}^nE\left[\rho(Z_i)\right]
\]
and
\[
P_n\rho=\frac{1}{n}\sum_{i=1}^n\rho(Z_i)
\]
target
\[
f^0=\mathrm{arg}\min_{f\in\mathbf{F}}P\rho_f
\]
For $f\in\mathbf{F}$, the excess risk
\[
\mathcal{E}(f)=P(\rho_f-\rho_{f^0})
\]
$\mathcal{F}=\{f_\beta:\beta\in\mathbb{R}^p\}\subset\mathbf{F}$, $\beta\mapsto f_\beta$ is linear
\fi

\section{Main results}
\label{sec:Main result}

\subsection{Assumptions}

We start with introducing regularity conditions on the loss function $L$.
\begin{assumption}[Convexity]\label{convex}
The map $\beta\mapsto L(f_\beta,\omega)$ is convex for all $\omega\in\Omega$. 
\end{assumption}

\begin{assumption}[Quadratic margin]\label{margin}
There is a constant $c>0$ and a subset $\mathbf{F}_\text{local}$ of $\mathbf{F}$ such that
\[
\mathcal{E}(f)\geq c\|f-f^0\|^2
\]
for all $f\in\mathbf{F}_\text{local}$.
\end{assumption}
Assumptions \ref{convex}--\ref{margin} are more or less commonly employed in the literature and satisfied by many standard loss functions such as the logistic loss and check loss functions (see Section \ref{sec:Example}). 

Next we introduce regularity conditions on the Gram matrix $\Sigma$, which is defined by
\[
\Sigma=(\Sigma^{ij})_{1\leq i,j\leq p}=(\langle\psi_i,\psi_j\rangle)_{1\leq i,j\leq p}.
\]
\if0
For $L>0$ and $S\subset\{1,\dots,p\}$, we define the compatibility constant as
\[
\phi^2(L,S)
=\min\{|S|\beta^\top\Sigma\beta:\|\beta_S\|_1=1,\|\beta_{S^c}\|\leq L\},
\]
where $|S|$ denotes the cardinality of $S$. Following \citet{Meinshausen2013} and \citet{SH2013}, we also introduce the non-negatively constrained minimal $\ell_1$-eigenvalue of $\Sigma$:
\[
\phi_{pos}^2(\Sigma)=\min\{\|f_\beta\|^2:\|\beta\|_1=1,\beta\succeq0\}
=\min\{\beta^\top\Sigma\beta:\|\beta\|_1=1,\beta\succeq0\}.
\]
\fi
The first condition is known as the \textit{compatibility condition}, which was originally introduced in \cite{vdG2007} and is commonly used to derive oracle inequalities for regularized estimators. 
\begin{definition}[Compatibility condition]
Let $S$ be a subset of $\{1,\dots,p\}$ and $L$ be a non-negative number. We say that the \textit{$(L,S)$-compatibility condition} is satisfied, with constant $\phi(L,S)>0$, if for all $\beta\in\mathbb{R}^p$ such that $\|\beta_S\|_1=1$ and $\|\beta_{S^c}\|_1\leq L$, it holds that
\[
|S|\beta^\top\Sigma\beta\geq\phi(L,S)^2.
\] 
\end{definition}
The compatibility condition is closely related to (and weaker than) the so-called \textit{restricted eigenvalue condition}, which is introduced in \cite{BRT2009} and \cite{Koltchinskii2009}. For two sets $S\subset\mathcal{N}\subset\{1,\dots,p\}$ and $L\geq0$, we define the following restricted set of $\beta$'s:
\[
\mathcal{R}(L,S,\mathcal{N}):=\left\{\beta\in\mathcal{R}^p:\|\beta_{S^c}\|_1\leq L\|\beta_S\|_1,\|\beta_{\mathcal{N}^c}\|_\infty\leq\min_{j\in\mathcal{N}\setminus S}|\beta_j|\right\}.
\]
\begin{definition}[Restricted eigenvalue condition]
Let $S$ be a subset of $\{1,\dots,p\}$, $L$ be a non-negative number, and $N$ be an integer satisfying $|S|\leq N\leq p$. We say that the \textit{$(L,S,N)$-restricted eigenvalue condition} is satisfied, with constant $\phi(L,S,N)>0$, if for all set $\mathcal{N}\subset\{1,\dots,p\}$ with $\mathcal{N}\supset S$ and $|\mathcal{N}|=N$, and all $\beta\in\mathcal{R}(L,S,\mathcal{N})$, it holds that
\[
\|\beta_{\mathcal{N}}\|_2^2\leq\beta^\top\Sigma\beta/\phi(L,S,N)^2.
\]
\end{definition}
Note that, if the $(L,S,N)$-restricted eigenvalue condition is satisfied with constant $\phi(L,S,N)>0$, the $(L,S)$-compatibility condition is satisfied with the same constant $\phi(L,S,N)$. We refer to \cite{vdGB2009} and Section 6.13 of \cite{BvdG2011} for discussions about these conditions and related ones appearing in the literature. 

The next condition, which is independently introduced in \citet{Meinshausen2013} and \citet{SH2013} respectively, plays a crucial role in derivation of oracle inequalities for non-negatively constrained estimation.  
\begin{definition}[Positive eigenvalue condition\footnote{In \cite{SH2013} this condition is called the \textit{self-regularizing property}.}]
We say that the \textit{positive eigenvalue condition} is satisfied, with constant $\tau>0$, if for all $\beta\in\mathbb{R}^p$ such that $\|\beta_S\|_1=1$ and $\beta\succeq0$, it holds that
 \begin{equation}
  \label{eq:positive}
 \beta^\top\Sigma\beta\geq\tau^2.
 \end{equation}

\end{definition}
There are several relevant situations where the compatibility/restricted eigenvalue condition and the positive eigenvalue condition are simultaneously satisfied with proper constants: See Section 2.2 of \cite{Meinshausen2013} and Section 5 of \cite{SH2013} for details. 

\subsection{Oracle inequalities}
Throughout this section, we fix a vector $\beta^*\in\mathbb{R}^p$ such that $\beta^*\succeq0$. Typically, $\beta^*$ is the non-negatively constrained minimizer of $\bar{L}(f_\beta)$ with respect to $\beta$:
\[
\bar{L}(\beta^*)=\min_{\beta\succeq0}\bar{L}(f_\beta),
\]
but other choices are possible. For each $\beta\in\mathbb{R}^p$, we set 
\[
v(\beta)=L(f_\beta)-\bar{L}(f_\beta).
\]
For $M>0$, we set
\[
\mathbf{Z}_M=\sup_{\beta:\|\beta-\beta^*\|_1\leq M}|v(\beta)-v(\beta^*)|
\]

\if0
\begin{align*}
\left|\frac{1}{n}\sum_{k=1}^nX_{ki}X_{kj}\right|\leq\frac{1}{n}\sum_{k=1}^nX_{ki}^2
\end{align*}

Ass. $\|\psi_j\|_n\leq1$

\begin{align*}
\left|\sum_{i,j}a_{ij}x_iy_j\right|\leq\|x\|_1\max_i\sum_j|a_{ij}y_j|
\end{align*}
\fi

Set $f^*=f_{\beta^*}$, $\hat{f}=f_{\hat{\beta}}$, $S_*=\{j:\beta_j^*\neq0\}$, $s_*=|S_*|$ and $C:=\max_{1\leq j\leq p}\|\psi_j\|^2$. We first derive oracle inequalities for the $\ell_1$-error of coefficient estimates and the prediction error of the non-negatively constrained estimation. 
\begin{theorem}\label{mainthm}
Suppose that Assumptions \ref{convex}--\ref{margin} are fulfilled. Suppose also that the positive eigenvalue condition is satisfied with constant $\tau>0$ and the $(3C/\tau^2,S_*)$-compatibility condition is satisfied with constant $\phi_*^2>0$. Let $\lambda>0$ be a positive number and set
\begin{equation}\label{lambda}
\varepsilon_\lambda=4\mathcal{E}(f^*)+\frac{12}{c\phi_*^2}\left(1+\frac{3C}{\tau^2}\right)^2s_*\lambda^2,\qquad
M_\lambda=\varepsilon_\lambda/\lambda.
\end{equation}
Suppose further that $f_\beta\in\mathbf{F}_\mathrm{local}$ for any $\beta\in\mathbb{R}^p$ whenever $\|\beta-\beta^*\|_1\leq M_\lambda$. Then, on the set $\{\mathbf{Z}_{M_\lambda}\leq\varepsilon_\lambda\}$ we have
\begin{equation}\label{theorem:l1}
\|\hat{\beta}-\beta^*\|_1\leq M_\lambda=\frac{4}{\lambda}\mathcal{E}(f^*)+\frac{12}{c\phi_*^2}\left(1+\frac{3C}{\tau^2}\right)^2s_*\lambda
 \end{equation}
and
\begin{equation}\label{theorem:prediction}
\mathcal{E}(\hat{f})\leq\frac{5}{4}\varepsilon_\lambda=5\mathcal{E}(f^*)+\frac{15}{c\phi_*^2}\left(1+\frac{3C}{\tau^2}\right)^2s_*\lambda^2. 
\end{equation}
\end{theorem}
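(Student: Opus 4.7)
My plan is to adapt the standard oracle-inequality argument for convex empirical-risk minimization (cf.\ Chapter~6 of \cite{BvdG2011}) to the sign-constrained setting, with the role usually played by a regularization parameter being taken over by the positive eigenvalue condition on the non-negative cone. I would argue by contradiction: if $\|\hat\beta-\beta^*\|_1>M_\lambda$, truncate via a convex combination and derive a strict contradiction on the event $\{\mathbf{Z}_{M_\lambda}\leq\varepsilon_\lambda\}$.

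First, I would set $t=\min\{1,M_\lambda/\|\hat\beta-\beta^*\|_1\}$ and $\tilde\beta=(1-t)\beta^*+t\hat\beta$; since $\beta^*$ and $\hat\beta$ are non-negative, so is $\tilde\beta$, and $\|\tilde\beta-\beta^*\|_1\leq M_\lambda$, placing $f_{\tilde\beta}$ in $\mathbf{F}_\mathrm{local}$ by hypothesis. Assumption~\ref{convex} combined with the minimality $L(f_{\hat\beta})\leq L(f^*)$ then gives $L(f_{\tilde\beta})\leq L(f^*)$, and subtracting $\bar L$ on both sides produces the basic inequality $\mathcal{E}(f_{\tilde\beta})\leq\mathcal{E}(f^*)+[v(\beta^*)-v(\tilde\beta)]$. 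On $\{\mathbf{Z}_{M_\lambda}\leq\varepsilon_\lambda\}$ together with $4\mathcal{E}(f^*)\leq\varepsilon_\lambda$, this collapses to $\mathcal{E}(f_{\tilde\beta})\leq\tfrac{5}{4}\varepsilon_\lambda$. Applying Assumption~\ref{margin} to both $f_{\tilde\beta}$ and $f^*$ and using $(a+b)^2\leq 2a^2+2b^2$ then delivers $\|f_{\tilde\beta}-f^*\|^2\leq 3\varepsilon_\lambda/c$.

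Next I would convert this Hilbert-space bound into an $\ell_1$-bound on $\delta:=\tilde\beta-\beta^*$ via case analysis, exploiting that $\delta_{S_*^c}=\tilde\beta_{S_*^c}\succeq0$. Writing $\mu=\|\delta_{S_*}\|_1$ and $\nu=\|\delta_{S_*^c}\|_1$, in the case $\nu\leq(3C/\tau^2)\mu$ the $(3C/\tau^2,S_*)$-compatibility condition gives $\phi_*^2\mu^2\leq s_*\|f_\delta\|^2$, so $\|\delta\|_1\leq(1+3C/\tau^2)\sqrt{s_*}\|f_\delta\|/\phi_*$; squaring and substituting both the Hilbert-space bound and the definition of $\varepsilon_\lambda$ yields $\|\delta\|_1\leq M_\lambda/2$. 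In the complementary case $\nu>(3C/\tau^2)\mu$, the positive eigenvalue condition applied to $\delta_{S_*^c}$, together with $\|f_{\delta_{S_*}}\|\leq\sqrt{C}\mu$, gives $\tau\nu\leq\|f_\delta\|+\sqrt{C}\mu$; substituting $\mu<\tau^2\nu/(3C)$ and using $\tau^2\leq C$ (obtained by testing the positive eigenvalue condition on $e_j$) delivers $\|\delta\|_1<2\|f_\delta\|/\tau$, and a comparison with $M_\lambda$ via the trivial bound $\phi_*^2\leq s_*C$ and AM-GM on $\tau+3C/\tau$ shows this is strictly smaller than $M_\lambda$.

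In both cases $\|\delta\|_1<M_\lambda$ strictly, contradicting the equality $\|\tilde\beta-\beta^*\|_1=M_\lambda$ that would result from $\|\hat\beta-\beta^*\|_1>M_\lambda$. Hence $\|\hat\beta-\beta^*\|_1\leq M_\lambda$, which is~(\ref{theorem:l1}); choosing $t=1$ so that $\tilde\beta=\hat\beta$ in the basic inequality then immediately yields $\mathcal{E}(\hat f)\leq\tfrac{5}{4}\varepsilon_\lambda$, namely~(\ref{theorem:prediction}). The hardest step is the second case in the $\ell_1$-bound: compatibility is inapplicable there, and one must extract the strict inequality from the positive eigenvalue condition alone. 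The required comparison reduces to the algebraic estimate $s_*(\tau+3C/\tau)^2\geq 12\phi_*^2$, which is not stated in the theorem but follows from $\phi_*^2\leq s_*C$ (trivial from the compatibility definition) combined with AM-GM.
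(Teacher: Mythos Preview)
Your proposal is correct and follows essentially the same route as the paper's own proof: convex truncation to the $\ell_1$-ball of radius $M_\lambda$, the basic inequality $\mathcal{E}(f_{\tilde\beta})\leq\mathcal{E}(f^*)+\mathbf{Z}_{M_\lambda}$, passage to $\|f_\delta\|^2\leq 3\varepsilon_\lambda/c$ via the quadratic margin, and the same two-case split according to whether $\|\delta_{S_*^c}\|_1\lessgtr(3C/\tau^2)\|\delta_{S_*}\|_1$, invoking compatibility in the first case and the positive eigenvalue condition (using $\delta_{S_*^c}\succeq0$) in the second. The only cosmetic differences are that the paper uses the peeling factor $t=M_\lambda/(M_\lambda+\|\hat\beta-\beta^*\|_1)$ and shows $\|\delta\|_1\leq M_\lambda/2$ directly rather than arguing by contradiction, and in Case~2 it expands the quadratic form $\delta^\top\Sigma\delta$ to obtain $\|f_\delta\|^2\geq(\tau^2/3)\|\delta_{S_*^c}\|_1^2$ instead of your triangle-inequality bound $\tau\nu\leq\|f_\delta\|+\sqrt{C}\mu$; both routes use the same auxiliary facts $\tau^2\leq C$ and $\phi_*^2\leq s_*C$ to close the argument.
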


\begin{rmk}
Following the spirit of \cite{vdG2007} and Chapter 6 of \cite{BvdG2011}, we consider the deterministic problem separately from the stochastic one in Theorem \ref{mainthm}. Namely, we state inequalities \eqref{theorem:l1}--\eqref{theorem:prediction} on the event $\{\mathbf{Z}_{M_\lambda}\leq\varepsilon_\lambda\}$ and do not refer to the occurrence probability of the event $\{\mathbf{Z}_{M_\lambda}\leq\varepsilon_\lambda\}$. The advantage of this approach is that we can apply our result to various situations (serially dependent data for example) once we have an appropriate probability inequality for the event $\{\mathbf{Z}_{M_\lambda}\leq\varepsilon_\lambda\}$. 
\end{rmk}

\begin{rmk}
Since inequalities \eqref{theorem:l1}--\eqref{theorem:prediction} are valid only on the set $\{\mathbf{Z}_{M_\lambda}\leq\varepsilon_\lambda\}$, we should choose $\lambda$ such that the set $\{\mathbf{Z}_{M_\lambda}\leq\varepsilon_\lambda\}$ has large probability. This is typically achieved by taking a large value of $\lambda$. In the meantime, inequalities \eqref{theorem:l1}--\eqref{theorem:prediction} are sharper as $\lambda$ is smaller, so there is a tradeoff between the sharpness of the inequalities and the probability with which the inequalities hold true. In typical situations $\lambda$ can be taken so that $\lambda$ is of order $\sqrt{\log p/n}$, where $n$ denotes the sample size (see Examples below as well as Sections 14.8--14.9 of \cite{BvdG2011}). Therefore, as long as the constants $\phi_*^2$ and $\tau^2$ are of order 1, our non-negatively constrained empirical loss minimization procedure attains the same order as that of the Lasso.
\end{rmk}

\begin{rmk}
The compatibility constant $\phi_*$ and the positive eigenvalue constant $\tau$ enter the bounds obtained in Theorem \ref{mainthm} with the form $\phi_*^{-2}+(\phi_*\tau)^{-2}$, which is similar to Theorem 2 of \cite{SH2013}. 
\end{rmk}

\begin{proof}[\bfseries\upshape Proof of Theorem \ref{mainthm}]
The basic strategy of the proof is the same as that of Theorem 6.4 from \cite{BvdG2011}. Throughout the proof, we assume that we are on the set $\{\mathbf{Z}_{M_\lambda}\leq\varepsilon_\lambda\}$. Set
\[
t=\frac{M_\lambda}{M_\lambda+\|\hat{\beta}-\beta^*\|_1},\qquad
\tilde{\beta}=t\hat{\beta}+(1-t)\beta^*,\qquad
\tilde{f}=f_{\tilde{\beta}}.
\]
Noting that $\|\tilde{\beta}-\beta^*\|_1\leq M_\lambda$, we have
\begin{align*}
\mathcal{E}(\tilde{f})
&=-\{v(\tilde{\beta})-v(\beta^*)\}+L(\tilde{f})-\bar{L}(f^0)-v(\beta^*)
\leq\mathbf{Z}_{M_\lambda}+L(\tilde{f})-\bar{L}(f^0)-v(\beta^*).
\end{align*}
Using Assumption \ref{convex}, we obtain
\begin{align*}
\mathcal{E}(\tilde{f})
\leq\mathbf{Z}_{M_\lambda}+tL(\hat{f})+(1-t)L(f^*)-\bar{L}(f^0)-v(\beta^*).
\end{align*}
Now, by the definition of $\hat{\beta}$ we have $L(\hat{f})\leq L(f^*)$, hence we have
\begin{align*}
\mathcal{E}(\tilde{f})
\leq\mathbf{Z}_{M_\lambda}+L(f^*)-\bar{L}(f^0)-v(\beta^*)
=\mathbf{Z}_{M_\lambda}+\mathcal{E}(f^*).
\end{align*}
Next set $\delta=\tilde{\beta}-\beta^*$. Then we have
\[
\|f_\delta\|^2=\|(\tilde{f}-f^0)-(f^*-f^0)\|^2\leq 2(\|\tilde{f}-f^0\|^2+\|f^*-f^0\|^2).
\]
Since $\tilde{f},f^*\in\mathbf{F}_\text{local}$ by assumption, we obtain
\begin{equation}\label{basic}
c\|f_\delta\|^2\leq2\{\mathcal{E}(\tilde{f})+\mathcal{E}(f^*)\}\leq2\{\mathbf{Z}_{M_\lambda}+2\mathcal{E}(f^*)\}\leq3\varepsilon_\lambda
\end{equation}
by Assumption \ref{margin}. Now if $\|\delta_{S_*^c}\|_1\leq(3C/\tau^2)\|\delta_{S_*}\|_1$, by the definition of $\phi_*^2$ we have
\begin{align*}
\|\delta_{S_*}\|_1^2\leq\frac{s_*\delta^\top\Sigma\delta}{\phi_*^2}=\frac{s_*\|f_\delta\|^2}{\phi_*^2}.
\end{align*}
Therefore, by \eqref{basic} we have
\begin{align*}
\|\delta_{S_*}\|_1\leq\sqrt{\frac{s_*\cdot3\varepsilon_\lambda}{c\phi_*^2}}.
\end{align*}
Hence it holds that
\begin{align*}
\|\delta\|_1=\|\delta_{S_*}\|_1+\|\delta_{S_*^c}\|_1\leq\left(1+\frac{3C}{\tau^2}\right)\sqrt{\frac{s_*\cdot3\varepsilon_\lambda}{c\phi_*^2}}\leq\frac{M_\lambda}{2}.
\end{align*}
On the other hand, if $\|\delta_{S_*^c}\|_1>(3C/\tau^2)\|\delta_{S_*}\|_1$, we have 
\begin{align*}
\|f_\delta\|^2&=\delta^\top\Sigma\delta=\delta_{S^c_*}^\top\Sigma\delta_{S^c_*}+2\delta_{S^c_*}^\top\Sigma\delta_{S_*}+\delta_{S_*}^\top\Sigma\delta_{S_*}\\
&\geq\delta_{S^c_*}^\top\Sigma\delta_{S^c_*}-2C\|\delta_{S^c_*}\|_1\|\delta_{S_*}\|_1
\geq\delta_{S^c_*}^\top\Sigma\delta_{S^c_*}-\frac{2\tau^2}{3}\|\delta_{S^c_*}\|_1^2.
\end{align*}
Since $\delta_{S_*^c}\succeq0$ by definition, we obtain
\begin{align*}
\|f_\delta\|^2\geq\frac{\tau^2}{3}\|\delta_{S^c_*}\|_1^2
\end{align*}
by the definition of $\tau^2$. Hence \eqref{basic} implies that
\begin{align*}
\|\delta_{S^c_*}\|_1^2\leq\frac{9\varepsilon_\lambda}{c\tau^2},
\end{align*}
and thus we obtain
\begin{align*}
\|\delta\|_1=\|\delta_{S_*}\|_1+\|\delta_{S_*^c}\|_1\leq
\left(1+\frac{\tau^2}{3C}\right)\|\delta_{S^c_*}\|_1\leq\left(3+\frac{\tau^2}{C}\right)\sqrt{\frac{\varepsilon_\lambda}{c\tau^2}}.
\end{align*}
Now, by definition we have $\tau^2\leq C$ and $\phi_*^2\leq s_*C$, hence it holds that
\begin{align*}
\|\delta\|_1\leq4\sqrt{\frac{Cs_*}{c\phi_*^2\tau^2}\lambda M_\lambda}\leq\frac{M_\lambda}{2}.
\end{align*}
Consequently, in either cases we obtain $\|\delta\|_1\leq M_\lambda/2$. Therefore, we have
\begin{align*}
\|\hat{\beta}-\beta^*\|_1=t^{-1}\|\delta\|_1
\leq(M_\lambda+\|\hat{\beta}-\beta^*\|_1)/2,
\end{align*}
hence $\|\hat{\beta}-\beta^*\|_1\leq M_\lambda$. This means \eqref{theorem:l1}. Moreover, we have
\begin{align*}
\mathcal{E}(\hat{f})=-\{v(\tilde{\beta})-v(\beta^*)\}+L(\hat{f})-\bar{L}(f^0)-v(\beta^*)
\leq\mathbf{Z}_{M_\lambda}+L(\hat{f})-\bar{L}(f^0)-v(\beta^*).
\end{align*}
Now, by the definition of $\hat{\beta}$ it holds that $L(\hat{f})\leq L(f^*)$, hence we conclude that
\begin{align*}
\mathcal{E}(\hat{f})
\leq\mathbf{Z}_{M_\lambda}+L(f^*)-\bar{L}(f^0)-v(\beta^*)
=\mathbf{Z}_{M_\lambda}+\mathcal{E}(f^*)\leq\frac{5}{4}\varepsilon_\lambda.
\end{align*}
This completes the proof of \eqref{theorem:prediction}.
\end{proof}

\if0
\begin{lemma}
If the $(L,S,N)$-restricted eigenvalue condition is satisfied with constant $\phi(L,S,N)>0$, we have $\phi(L,S,N)\leq C:=\max_{1\leq j\leq p}\|\psi_j\|^2$. 
\end{lemma}

\begin{proof}
Take a set $\mathcal{N}\subset\{1,\dots,p\}$ with $\mathcal{N}\supset S$ and $|\mathcal{N}|=N$. Let us define the vector $\beta=(\beta_1,\dots,\beta_p)^\top\in\mathbb{R}^p$ by
\[
\beta_j=\left\{
\begin{array}{ll}
1/|S|  & \text{if }j\in S,   \\
0  & \text{otherwise}.
\end{array}
\right.
\]
We obviously have $\beta\in\mathcal{R}(L,S,\mathcal{N})$, so $(L,S,N)$-restricted eigenvalue condition implies that 
\[
\|\beta_{\mathcal{N}}\|_2\leq\|f_\beta\|/\phi(L,S,N).
\]
Since we have $\|\beta_{\mathcal{N}}\|_2=1$ and $\|f_\beta\|^2\leq C^2\|\beta\|_1^2=C^2$, we obtain the desired result.
\end{proof}
\fi

Next we derive oracle inequalities for the $\ell_q$-error of coefficient estimates for all $q\in[1,2]$. We only focus on the case of $f^0=f^*$ for simplicity.  
\begin{theorem}\label{lq-oracle}
Suppose that Assumptions \ref{convex}--\ref{margin} are fulfilled. Suppose also that the positive eigenvalue condition is satisfied with constant $\tau>0$ and the $(3C/\tau^2,S_*,2s_*)$-restricted eigenvalue condition is satisfied with constant $\phi_*\in(0,1]$. Let $\lambda>0$ and define $\varepsilon_\lambda$ and $M_\lambda$ as in \eqref{lambda}. 
Suppose further that $f^0=f^*$ and $f_\beta\in\mathbf{F}_\mathrm{local}$ for any $\beta\in\mathbb{R}^p$ whenever $\|\beta-\beta^*\|_1\leq M_\lambda$. Then, on the set $\{\mathbf{Z}_{M_\lambda}\leq\varepsilon_\lambda\}$ we have
\begin{equation}\label{theorem:lq}
\|\hat{\beta}-\beta^*\|_q^q\leq\left\{\frac{12}{c\phi_*^2}\left(1+\frac{3C}{\tau^2}\right)^2\right\}^qs_*\lambda^q
\end{equation}
for any $q\in[1,2]$. 
\end{theorem}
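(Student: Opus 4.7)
The plan is to leverage Theorem~\ref{mainthm} together with the margin condition and the restricted eigenvalue condition, and then interpolate between the $\ell_1$ and $\ell_2$ norms of $\delta := \hat\beta - \beta^*$. Since $f^0 = f^*$ forces $\mathcal{E}(f^*) = 0$, Theorem~\ref{mainthm} already gives $\|\delta\|_1 \leq M_\lambda$ and $\mathcal{E}(\hat f) \leq (5/4)\varepsilon_\lambda$ on $\{\mathbf{Z}_{M_\lambda}\leq\varepsilon_\lambda\}$. Applying Assumption~\ref{margin} at $\hat f$ (which lies in $\mathbf{F}_\text{local}$ by the $\ell_1$ bound and the hypothesis) and using $f^0 = f^*$ upgrades the prediction error bound to the quadratic-form estimate $\|f_\delta\|^2 = \|\hat f - f^*\|^2 \leq 5\varepsilon_\lambda/(4c)$.

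Next I would split into the two cases already used in the proof of Theorem~\ref{mainthm}, according to whether the cone inequality $\|\delta_{S_*^c}\|_1 \leq (3C/\tau^2)\|\delta_{S_*}\|_1$ holds. In the cone case, I would take $\mathcal{N}$ to be the union of $S_*$ and the indices of the $s_*$ largest $|\delta_j|$'s in $S_*^c$, so that $|\mathcal{N}| = 2s_*$ and $\delta \in \mathcal{R}(3C/\tau^2, S_*, \mathcal{N})$. The $(3C/\tau^2, S_*, 2s_*)$-restricted eigenvalue condition then yields $\|\delta_\mathcal{N}\|_2^2 \leq \|f_\delta\|^2/\phi_*^2$, and the standard decreasing-blocks argument (slicing $S_*^c \setminus (\mathcal{N}\setminus S_*)$ into consecutive pieces of size $s_*$ sorted by $|\delta_j|$) produces $\|\delta_{\mathcal{N}^c}\|_2 \leq \|\delta_{S_*^c}\|_1/\sqrt{s_*}$. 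Combining this with the cone inequality and $\|\delta_{S_*}\|_1 \leq \sqrt{s_*}\|\delta_\mathcal{N}\|_2$ gives $\|\delta\|_2^2 \leq (1+3C/\tau^2)^2 \|f_\delta\|^2/\phi_*^2$. In the non-cone case, the positive-eigenvalue step from the proof of Theorem~\ref{mainthm} transfers verbatim to the actual $\delta$ and yields $\|f_\delta\|^2 \geq (\tau^2/3)\|\delta_{S_*^c}\|_1^2$, from which $\|\delta\|_1$ and $\|\delta\|_2$ are both controlled by a constant multiple of $\|f_\delta\|/\tau$.

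With $\|\delta\|_1$ and $\|\delta\|_2$ in hand, I would finish using the Lyapunov-type interpolation $\|\delta\|_q^q \leq \|\delta\|_1^{2-q}\|\delta\|_2^{2q-2}$, valid for $q \in [1,2]$. Substituting $\varepsilon_\lambda = K s_* \lambda^2$ and $M_\lambda = K s_* \lambda$, where $K := (12/(c\phi_*^2))(1+3C/\tau^2)^2$, the exponents of $K$, $s_*$, and $\lambda$ collapse cleanly to the target form $K^q s_* \lambda^q$; the residual numerical constant works out to $(5/48)^{q-1} \leq 1$ in the cone case.

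The main obstacle is the constant bookkeeping in the non-cone case, where the $\|\delta\|_2$ bound carries an extra $\tau^{-2}$ that is not obviously absorbed into $K^q$ after interpolation. The fix is the same trick used at the end of the proof of Theorem~\ref{mainthm}: invoke the trivial inequalities $\phi_*^2 \leq C$ and $\tau^2 \leq C$ (obtained by plugging $\beta = e_j$, $j \in S_*$, into the definitions of the two eigenvalue-type constants) together with the crude estimate $(1+3C/\tau^2)^2 \geq 9C^2/\tau^4$, which suffice to bound the offending $\tau^{-2}$ factor by $K$ and thereby close the estimate.
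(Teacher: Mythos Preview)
Your proposal is correct and follows essentially the same route as the paper's proof: the same two-case split (cone vs.\ non-cone), the same set $\mathcal{N}$, and the same use of the restricted eigenvalue condition together with the positive-eigenvalue argument from Theorem~\ref{mainthm}. The only difference is in the finishing step: the paper bounds $\|\delta_{\mathcal{N}}\|_q$ and $\|\delta_{\mathcal{N}^c}\|_q$ directly via H\"older and Lemma~6.9 of \cite{BvdG2011} for general $q$, whereas you first establish the $\ell_2$ bound and then interpolate $\|\delta\|_q^q \leq \|\delta\|_1^{2-q}\|\delta\|_2^{2q-2}$, which is an equivalent and equally clean way to close the argument.
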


\begin{rmk}
To derive oracle inequalities for the $\ell_q$-error of coefficient estimates for $q\in[1,2]$, we need to impose a restricted eigenvalue condition stronger than the compatibility condition imposed in Theorem \ref{mainthm}, which is standard in the literature of regularized estimation; see e.g.~Section 6.8 of \cite{BvdG2011}. 
\end{rmk}

\begin{rmk}
The compatibility constant $\phi_*$ and the positive eigenvalue constant $\tau$ enter the bounds obtained in Theorem \ref{lq-oracle} with the form $\{\phi_*^{-2}+(\phi_*\tau)^{-2}\}^q$, which is again similar to Theorem 2 of \cite{SH2013}. 
\end{rmk}

\begin{proof}[\bfseries\upshape Proof of Theorem \ref{lq-oracle}]
Set
\[
\kappa=\frac{12}{c\phi_*^2}\left(1+\frac{3C}{\tau^2}\right)^2.
\]
Noting that $\mathcal{E}(f^*)=0$ because $f^0=f^*$, we have $\varepsilon_\lambda=\kappa s_*\lambda$. 

Let us sort the elements $j_1,\dots,j_{|S_*^c|}$ of the set $S_*^c$ as $|\hat{\beta}_{j_1}|\geq\cdots\geq|\hat{\beta}_{j_{|S_*^c|}}|$, and set $\mathcal{N}=S_*\cup\{j_1,\dots,j_{s_*}\}$. Then, by Lemma 6.9 of \cite{BvdG2011} we have
\begin{align*}
\|\hat{\beta}_{\mathcal{N}^c}\|_q\leq s_*^{-(q-1)/q}\|\hat{\beta}_{S_*^c}\|_1.
\end{align*}
Next, noting that $f^0=f^*$ and $\phi(3C/\tau^2,S_*,2s_*)\leq\phi(3C/\tau^2,S_*)$, an analogous argument to the proof of Theorem \ref{mainthm} yields
\[
\mathcal{E}(\hat{f})\leq\mathbf{Z}_{M_\lambda}\leq\varepsilon_\lambda.
\] 
Now let us set $\delta=\hat{\beta}-\beta^*$. If $\|\delta_{S_*^c}\|_1\leq(3C/\tau^2)\|\delta_{S_*}\|_1$, the $(3C/\tau^2,S_*,2s_*)$-restricted eigenvalue condition implies that
\[
\|\delta_{\mathcal{N}}\|_2\leq\|f_\delta\|/\phi_*,
\]
hence the quadratic margin condition yields
\[
\|\delta_{\mathcal{N}}\|_2^2\leq \mathcal{E}(\hat{f})/c\phi_*^2\leq\varepsilon_\lambda/c\phi_*^2.
\]
On the other hand, the inequality $\|\delta_{S_*^c}\|_1\leq(3C/\tau^2)\|\delta_{S_*}\|_1$ and the Schwarz inequality imply that
\[
\|\hat{\beta}_{S_*^c}\|_1=\|\delta_{S_*^c}\|_1\leq(3C/\tau^2)\|\delta_{S_*}\|_1\leq(3C/\tau^2)\sqrt{s_*}\|\delta_{\mathcal{N}}\|_2.
\]
Consequently, using the H\"older inequality, we obtain
\begin{align*}
\|\delta\|_q^q&=\|\delta_{\mathcal{N}}\|_q^q+\|\delta_{\mathcal{N}^c}\|_q^q
\leq(2s_*)^{1-q/2}\|\delta_{\mathcal{N}}\|_2^q+\|\hat{\beta}_{\mathcal{N}^c}\|_q^q
\leq\left\{2^{1-q/2}+(3C/\tau^2)^q\right\}s_*^{1-q/2}\|\delta_{\mathcal{N}}\|_2^q\\
&\leq\left\{2^{1-q/2}+(3C/\tau^2)^q\right\}s_*^{1-q/2}\varepsilon_\lambda^{q/2}/c^{q/2}\phi_*^q.
\end{align*}
Since $q\geq1$, we conclude that
\[
\|\delta\|_q^q\leq\kappa^{q/2}s_*^{1-q/2}\kappa^{q/2}s_*^{q/2}\lambda^{q/2}=\kappa^qs_*\lambda^{q/2}.
\]
In the meantime, if $\|\delta_{S_*^c}\|_1>(3C/\tau^2)\|\delta_{S_*}\|_1$, the same argument as in the proof of Theorem \ref{mainthm} yields
\[
\|\delta\|_1\leq\left(3+\frac{\tau^2}{C}\right)\sqrt{\frac{\varepsilon_\lambda}{c\tau^2}}.
\]
Therefore, noting that $\phi(3C/\tau^2,S_*,2s_*)\leq1$ and $\tau^2\leq C$, we obtain
\begin{align*}
\|\delta\|_q^q\leq\|\delta\|_1^q
\leq4^q\left\{\frac{C}{c\phi_*^2\tau^2}\varepsilon_\lambda\right\}^{q/2}
\leq\left\{\frac{4^{q-1}}{3^q}+\left(\frac{3C}{\tau^2}\right)^q\right\}\frac{\varepsilon_\lambda^{q/2}}{c^{q/2}\phi_*^q}.
\end{align*}
Since $q\leq2$, we conclude that
\[
\|\delta\|_q^q\leq\kappa^{q/2}\cdot \kappa^{q/2}s_*^{q/2}\lambda^{q/2}
\leq \kappa^qs^*\lambda^{q/2}.
\]
Hence we complete the proof.
\end{proof}

\section{Application to Lipschitz loss functions}
\label{sec:Example}


The main results presented in Section \ref{sec:Main result} are general but rather abstract, hence their implications are less understandable. Therefore, in this section we apply our results to the case that the loss function is Lipschitz continuous and derive more concrete versions of oracle inequalities for logistic and quadratic regressions. 
  
The setting is basically adopted from Section 6 of \cite{vdGM2012}. Let us consider the case that the empirical loss function $L$ is given by \eqref{loss}. We assume that $X_1,\dots,X_n$ are fixed covariates, i.e.~they are deterministic, and that $Y_1,\dots,Y_n$ are independent. Let $Q_n$ be the empirical probability measure generated by $\{X_i\}_{i=1}^n$, i.e.
\[
Q_n(A)=\frac{1}{n}\sum_{i=1}^n1_A(X_i)
\]
for each measurable set $A$ of $\mathcal{X}$. Then, we take $\mathbf{F}$ as a subspace of $L^2(Q_n)$. We assume $f^0=f_{\beta^*}$ for some $\beta^*\in\mathbb{R}^p$ such that $\beta^*\succeq0$. We set $K_X:=\max_{1\leq j\leq p}\max_{1\leq i\leq n}|\psi_j(X_i)|$ and $K_0:=\max_{1\leq i\leq n}|f^0(X_i)|$. 
 
We impose the following conditions on the loss function: 
\begin{enumerate}[label={\normalfont[A\arabic*]}]

\item\label{hypo:convex} The map $a\mapsto\ell(a,y)$ is convex for all $y\in\mathcal{Y}$.

\item\label{hypo:Lipschitz} There is a constant $c_L>0$ such that
\begin{equation*}
|\ell(a,y)-\ell(b,y)|\leq c_L|a-b|
\end{equation*} 
for all $a,b\in\mathbb{R}$ and $y\in\mathcal{Y}$. 

\item\label{hypo:non-d} For every $i=1,\dots,n$, the function $\bar{\ell}_i:\mathbb{R}\to\mathbb{R}$ defined by $\bar{\ell}_i(a)=E[\ell(a,Y_i)]$ for $a\in\mathbb{R}$ is twice differentiable and satisfies $\inf_{|a|\leq K_X+K_0}\bar{\ell}_i''(a)>0$.

\end{enumerate}
\if0
\[
\lambda=2c_L(A+4)\sqrt{\frac{2K\log(2p)}{n}}
\]
for some $A>0$, where $C:=\max_{1\leq j\leq p}\|\psi_j\|^2$. 
\fi
\begin{theorem}\label{theorem:Lipschitz}
Suppose that \ref{hypo:convex}--\ref{hypo:non-d} are satisfied. Suppose also that the positive eigenvalue condition is satisfied with constant $\tau>0$ and the $(3C/\tau^2,S_*,2s_*)$-restricted eigenvalue condition is satisfied with constant $\phi_*\in(0,1]$. Set
\[
\lambda=2c_LC\left(4\sqrt{\frac{2\log(2p)}{n}}+\sqrt{\frac{2t}{n}}\right)
\]
for some $t>0$ and suppose that $M_\lambda\leq1$. Then, with probability larger than $1-e^{-t}$ we have \eqref{theorem:l1}--\eqref{theorem:prediction} and \eqref{theorem:lq} with $c:=\frac{1}{2}\min_{1\leq i\leq n}\inf_{|a|\leq K_X+K_0}\bar{\ell}_i''(a)$.
\if0
\begin{equation}\label{theorem:l1}
\|\hat{\beta}-\beta^*\|_1\leq\frac{2}{c_L(A+4)\sqrt{\frac{2K\log(2p)}{n}}}\mathcal{E}(f^*)+\frac{24c_L(A+4)}{c\phi_*^2}\left(1+\frac{3K}{\nu}\right)^2s_*\sqrt{\frac{2K\log(2p)}{n}}
\end{equation}
and
\begin{equation}\label{theorem:prediction}
\mathcal{E}(\hat{f})\leq5\mathcal{E}(f^*)+\frac{120c_L^2(A+4)^2K}{c\phi_*^2}\left(1+\frac{3K}{\nu}\right)^2s_*\frac{\log(2p)}{n}. 
\end{equation}
\fi
\end{theorem}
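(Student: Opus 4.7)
The overall strategy is to reduce Theorem~\ref{theorem:Lipschitz} to Theorems~\ref{mainthm} and \ref{lq-oracle} by verifying their abstract hypotheses in the Lipschitz setup and then establishing $P(\mathbf{Z}_{M_\lambda}\leq\varepsilon_\lambda)\geq 1-e^{-t}$. Assumption~\ref{convex} is immediate from \ref{hypo:convex} because $\beta\mapsto f_\beta$ is linear and the composition of a convex function with a linear map is convex. For Assumption~\ref{margin}, I would take
\[
\mathbf{F}_{\text{local}}=\{f\in\mathbf{F}:\max_{1\leq i\leq n}|f(X_i)|\leq K_X+K_0\}.
\]
Since $f^0$ minimises $\bar{L}(f)=n^{-1}\sum_i\bar{\ell}_i(f(X_i))$ over the rich space $\mathbf{F}\subset L^2(Q_n)$, pointwise optimality at each $X_i$ forces $\bar{\ell}_i'(f^0(X_i))=0$. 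A second-order Taylor expansion of $\bar{\ell}_i$ around $f^0(X_i)$ combined with \ref{hypo:non-d} then yields $\mathcal{E}(f)\geq c\|f-f^0\|^2$ for $f\in\mathbf{F}_{\text{local}}$, with $c$ as stated. Moreover, the bound $|f_\beta(X_i)-f^*(X_i)|\leq K_X\|\beta-\beta^*\|_1$ together with $M_\lambda\leq 1$ guarantees that $f_\beta\in\mathbf{F}_{\text{local}}$ whenever $\|\beta-\beta^*\|_1\leq M_\lambda$, as required.

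The heart of the proof is showing $\mathbf{Z}_{M_\lambda}\leq\varepsilon_\lambda=\lambda M_\lambda$ on an event of probability at least $1-e^{-t}$. Writing $\ell_\beta(x,y):=\ell(f_\beta(x),y)$, we have
\[
\mathbf{Z}_M=\sup_{\|\beta-\beta^*\|_1\leq M}\bigl|(P_n-P)(\ell_\beta-\ell_{\beta^*})\bigr|,
\]
where $P_nh:=n^{-1}\sum_ih(X_i,Y_i)$. By \ref{hypo:Lipschitz} the class $\{\ell_\beta-\ell_{\beta^*}:\|\beta-\beta^*\|_1\leq M\}$ has pointwise envelope $c_LK_XM$. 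Standard symmetrization followed by the Ledoux--Talagrand contraction principle, applied to the $1$-Lipschitz maps $a\mapsto c_L^{-1}[\ell(a+f^*(X_i),Y_i)-\ell(f^*(X_i),Y_i)]$, linearises the class:
\[
E[\mathbf{Z}_M]\leq 4c_LM\cdot E\Bigl[\max_{1\leq j\leq p}\Bigl|\frac{1}{n}\sum_{i=1}^n\varepsilon_i\psi_j(X_i)\Bigr|\Bigr],
\]
and Hoeffding's inequality with a union bound over $\{\pm 1\}\times\{1,\dots,p\}$ bounds the rightmost expectation by $\sqrt{2C\log(2p)/n}$. Then applying Bousquet's (or Massart's) concentration inequality for bounded empirical processes with the above envelope produces $\mathbf{Z}_M\leq 2E[\mathbf{Z}_M]+c_LK_XM\sqrt{2t/n}$ (up to lower-order terms) with probability at least $1-e^{-t}$. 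Plugging in $M=M_\lambda$ and the stated choice of $\lambda$, the right-hand side is at most $\lambda M_\lambda=\varepsilon_\lambda$, and Theorems~\ref{mainthm} and \ref{lq-oracle} then deliver \eqref{theorem:l1}, \eqref{theorem:prediction} and \eqref{theorem:lq} on this event.

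The main technical obstacle is to match the resulting deviation bound exactly with the compact expression for $\lambda$ in the statement. The contraction step naturally produces the variance-type constant $\sqrt{C}$ in front of $\sqrt{\log(2p)/n}$, while the Bousquet-type deviation introduces the envelope constant $K_X$ in front of $\sqrt{t/n}$, and these two constants must be collapsed into the single prefactor $C$ appearing in $\lambda$. Accomplishing this requires using $\sqrt{C}\leq K_X$ (and the normalisation implicit in the problem) to absorb the envelope into the variance scale, as well as the assumption $M_\lambda\leq 1$ to render the sub-exponential residual term $O(K_X^2M/n)$ from Bousquet's inequality negligible relative to the leading sub-Gaussian terms.
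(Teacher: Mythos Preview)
Your proposal is correct and follows essentially the same route as the paper: verify Assumptions~\ref{convex}--\ref{margin} from \ref{hypo:convex}--\ref{hypo:non-d} via a second-order Taylor expansion (using that $f^0$ is a pointwise minimiser), check that $\|\beta-\beta^*\|_1\leq M_\lambda\leq1$ forces $f_\beta\in\mathbf{F}_{\mathrm{local}}$, and then invoke Theorems~\ref{mainthm}--\ref{lq-oracle} on the event $\{\mathbf{Z}_{M_\lambda}\leq\varepsilon_\lambda\}$. The paper's proof differs only cosmetically---it takes $\mathbf{F}_{\mathrm{local}}=\{f:\max_i|f(X_i)-f^0(X_i)|\leq K_X\}$ (equivalent to yours after a triangle inequality) and, for the probability bound $P(\mathbf{Z}_{M_\lambda}\leq\varepsilon_\lambda)\geq 1-e^{-t}$, simply cites Example~14.2 of B\"uhlmann and van~de~Geer (2011), which packages exactly the symmetrization/contraction/bounded-differences argument you spelled out.
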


\begin{proof}
We apply Theorems \ref{mainthm}--\ref{lq-oracle}. Assumption \ref{convex} is satisfied by \ref{hypo:convex}. To check Assumption \ref{margin}, we set $\mathbf{F}_\text{local}:=\{f\in\mathbf{F}:\max_{1\leq i\leq n}|f(X_i)-f^0(X_i)|\leq K_X\}$. Since $M_\lambda\leq1$ and $f^0=f_{\beta^*}$, we have $f_\beta\in\mathbf{F}_\text{local}$ as long as $\|\beta-\beta^*\|_1\leq M_\lambda$. Under \ref{hypo:non-d}, by Taylor's theorem we have
\[
\bar{\ell}_i(f(X_i))-\bar{\ell}_i(f^0(X_i))\geq c(f(X_i)-f^0(X_i))^2
\]
for all $f\in\mathbf{F}_\text{local}$. Hence Assumption \ref{margin} is satisfied. Therefore, the proof is completed once we show $P(\mathbf{Z}_{M_\lambda}\leq\varepsilon_\lambda)\geq1-e^{-t}$. This follows from Example 14.2 of \cite{BvdG2011} due to \ref{hypo:Lipschitz}.
\end{proof}

\begin{example}[Logistic regression]
Let us consider the case $\mathcal{Y}=\{0,1\}$ and we take
\[
\ell(a,y)=-\{y\log G(a)+(1-y)\log(1-G(a))\}
\]
as the loss function, where
\[
G(a)=\frac{1}{1+e^{-a}}.
\]
Since $\frac{\partial\ell}{\partial a}(a,y)=-\{y-G(a)\}$ and $\frac{\partial^2\ell}{\partial a^2}(a,y)=G(a)(1-G(a))$, 
all the assumptions \ref{hypo:convex}--\ref{hypo:non-d} are satisfied. Therefore, we can derive an oracle inequality for the non-negatively constrained logistic regression from Theorem \ref{theorem:Lipschitz}.
\end{example}

\begin{example}[Quantile regression]
If we model the $\gamma$-quantile ($\gamma\in(0,1)$) of the distribution of $Y_i$ by $f(X_i)$, we take
\[
\ell(a,y)=\rho_\gamma(y-a)
\]
as the loss function, where
\[
\rho_\gamma(z)=\gamma|z|1_{\{z>0\}}+(1-\gamma)|z|1_{\{z\leq0\}}.
\]
In this case \ref{hypo:convex} is evidently true and \ref{hypo:Lipschitz} holds true with $c_L=1$. Moreover, if $Y_i$ has a density $g_i$ with $\inf_{|a|\leq K_X+K_0}g_i(a)>0$, \ref{hypo:non-d} holds true. Therefore, in this situation we can derive an oracle inequality for the non-negatively constrained quantile regression from Theorem \ref{theorem:Lipschitz}.
\end{example}

\section{Numerical simulation}
\label{sec:Numerical simulation}
In this section we conduct some numerical simulation to complement our theoretical findings obtained in the preceding sections. Specifically, we examine the logistic and least absolute deviation regressions in a setting analogous to the one of \cite{Meinshausen2013}. Following \cite{Meinshausen2013}, we compare the performance of sign-constrained regression to its Lasso counterpart with a cross-validated choice of the regularization parameter $\lambda$, in terms of their $\ell_1$-errors of coefficient estimation.\footnote{The Lasso version of logistic regression is implemented by the \textsf{R} package \textbf{glmnet}, while the Lasso version of logistic regression is implemented by the \textsf{R} package \textbf{rqPen}.}

\subsection{Toeplitz design}
\label{sec:Toeplitz}

The theoretical results shown in the preceding sections suggests that the positive eigenvalue condition should play a key role in success of sign-constrained regression. In fact, the inequalities of the theorems are shaper as the constant $\phi_*$ of the positive eigenvalue condition is larger. To confirm this point in the current experiment, we adopt the Toeplitz design to simulate the covariates. Specifically, following \cite{Meinshausen2013}, we draw covariates from the $p$-dimensional normal distribution $N_p(0, \Sigma)$, where the $(k,k')$-th component of the covariance matrix $\Sigma$ is given by
\begin{equation}
 \label{eq:Toeplitz}
\Sigma_{kk'} = \rho^{|k - k'|/p}    
\end{equation}
for $k,k' \in \{1, \cdots, p\}$ with some $\rho\in(0,1)$. In this case the positive eigenvalue condition is satisfied with constant $\rho^2$ (see Example I from page 1611 of \cite{Meinshausen2013} or Section 5.2 of \cite{SH2013}). Consequently, we may expect that the performance of sign-constrained regression would be more competitive with the Lasso as the parameter $\rho$ increases. We vary it as $\rho\in\{0.1,0.2,\dots,0.8,0.9\}$.  


\subsection{Logistic regression}
\subsubsection{Setting}

We begin by considering a logistic regression model:
\begin{equation}
 \label{eq:LogisticReg}
P(y=1|\bs{x}) = \frac{1}{1 + \exp(-\bs{x}^\top\beta)},
\end{equation}
where $y\in\{0,1\}$ is the response variable and $\bs{x}\sim N_p(0,\Sigma)$ is the vector of the covariates. 
The components of $\beta$ are identically zero, except for $s$ randomly chosen components that are all set to 1. We vary $s\in\{3,10,20\}$ to assess the effect of the sparsity of the model. We also vary the dimension as $p\in\{20,50,100,200,500,1000\}$.    
We simulate $n$ independent observations $\{(\bs{x}_i,y_i)\}_{i=1}^n$ for the pair $(\bs{x},y)$ from model \eqref{eq:LogisticReg}. We set $n=100$ in this experiment.   

\subsubsection{Results}
The results are shown in Figure \ref{fig:logit}. We use the logarithm of the ratio $\|\hat{\beta} - \beta^{*}\|_1/\|\hat{\beta}^{\lambda} - \beta^{*}\|_1 $ to evaluate the regression performance, where $\hat{\beta}$ and $\hat{\beta}^{\lambda}$ are the estimates of the regression coefficient $\beta$ by sign-constrained regression and the Lasso with a cross-validated choice of $\lambda$, respectively. 

We find from the figure that the relative estimation error of the sign-constrained regression is smaller as $\rho$ increases, which is in line with the theory developed in this paper (see Section \ref{sec:Toeplitz}). 
In particular, when the value of $\rho$ is close to 1, the sign-constrained regression tends to outperform the Lasso in the sparsest scenario $s=3$ in this experiment. 
This result confirms that sign-constrained regression performs well under strongly correlated design, which was also pointed out in \cite{Meinshausen2013}.

\begin{figure}[H]
 \centering
  \includegraphics[scale=0.6]{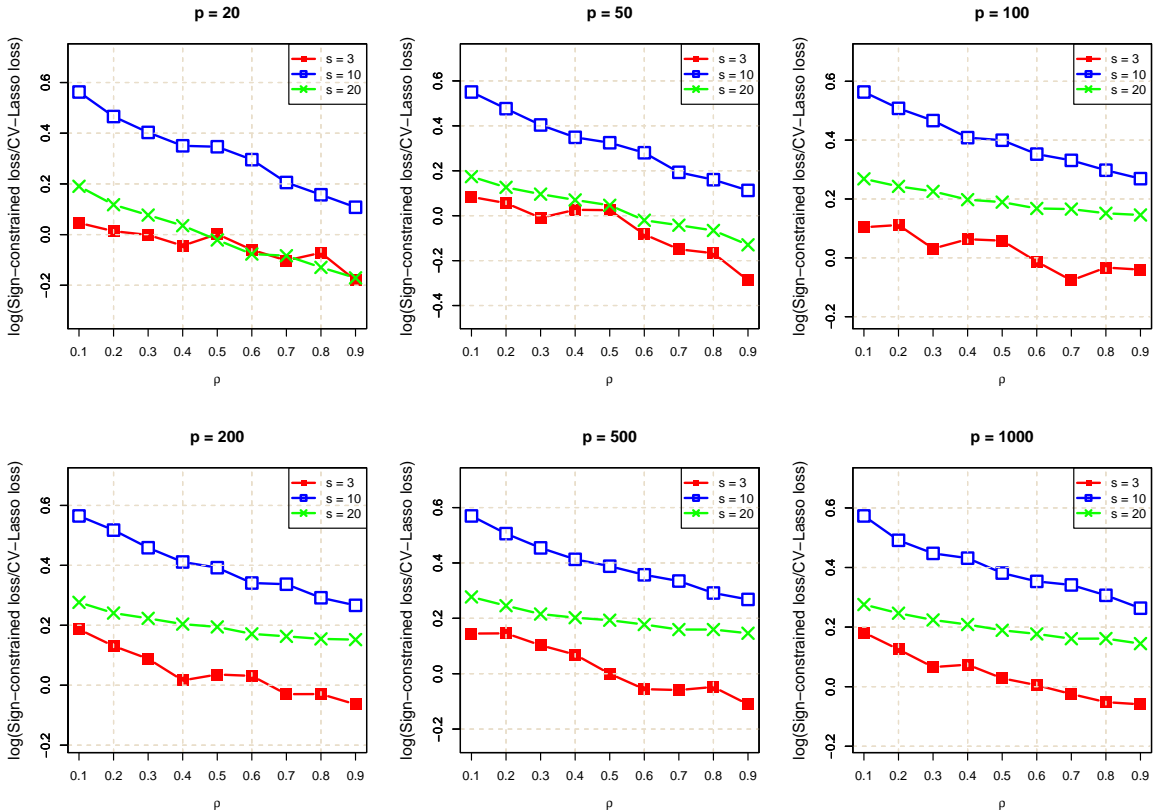}
 \caption{The logarithm of the ratio $\|\hat{\beta} - \beta^{*}\|_1/\|\hat{\beta}^{\lambda} - \beta^{*}\|_1 $ of the logistic regression (the average of 500 simulation runs).}
 \label{fig:logit}
\end{figure}

\subsection{Least Absolute Deviation regression}

\subsubsection{Setting}

Next we examine a linear regression model with Laplacian errors
\begin{equation}
 \label{eq:LADReg}
y = \bs{x}^\top\beta + \epsilon,
\end{equation}
where
$\bs{x} ~ \sim N_p(0, \Sigma)$ and $\epsilon$ is a random variable following the standard Laplace distribution.
As in the previous subsection, the components of $\beta$ are identically zero, except for $s$ randomly chosen components that are all set to 1 with $s\in\{3,10,20\}$. We simulate $n$ independent observations $\{(\bs{x}_i,y_i)\}_{i=1}^n$ for the pair $(\bs{x},y)$ from model \eqref{eq:LADReg}, and estimate the regression coefficient $\beta$ by least absolute deviations with sign-constraints or a Lasso-type penalty with a cross-validated choice of the regularization parameter $\lambda$. Since least absolute deviations is computationally more expensive than logistic regression due to the lack of differentiability of the loss function, we set $n=40$ and only consider the dimension $p \in \{20, 40,80\}$ to reduce computational time. 

\subsubsection{Results}
The results are shown in Figure \ref{fig:lad}. Similarly to the case of logistic regression, we use the logarithm of the ratio $\|\hat{\beta} - \beta^{*}\|_1/\|\hat{\beta}^{\lambda} - \beta^{*}\|_1 $ to evaluate the regression performance, where $\hat{\beta}$ and $\hat{\beta}^{\lambda}$ are the estimates of the regression coefficient $\beta$ by sign-constraint regression and the Lasso with a cross-validated choice of $\lambda$, respectively. 

Since the case where $p = 20$ with $s = 20$ is not a sparse setting, the corresponding result displays a different pattern from other settings. 
Except for this case, the relative estimation error of the sign-constrained regression tends to be smaller as $\rho$ increases. 
This is the same as in the case of logistic regression.
Furthermore, in the least absolute deviations regression case, sign-constrained regression's performance is better than the one of the Lasso in almost all simulation settings.


\begin{figure}[H]
 \centering
  \includegraphics[scale=0.6]{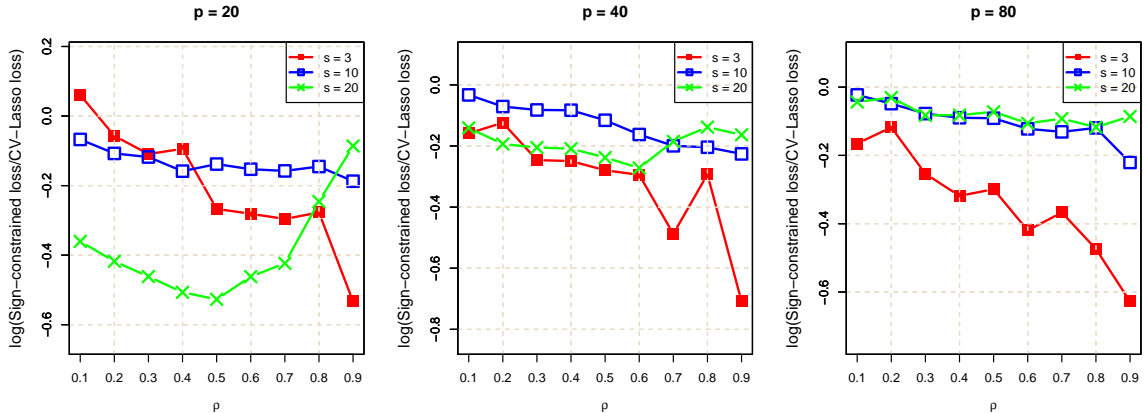}
 \caption{The logarithm of the ratio $\|\hat{\beta} - \beta^{*}\|_1/\|\hat{\beta}^{\lambda} - \beta^{*}\|_1 $ of the least absolute deviations regression (the average of 500 simulation runs).}
 \label{fig:lad}
\end{figure}

\section{Conclusion}

In this paper, 
we have studied the theoretical properties of sign-constrained regression with convex loss function under an analogous setting to the Lasso in \cite{vdG2007,vdG2008}, the $\ell_p$-penalization with $p\geq1$ in \cite{Koltchinskii2009} and the elastic net in \cite{CK2016}. 
As a result, we have derived oracle inequalities for the prediction error and the $\ell_q$-error of the coefficient estimates for $q\in[1,2]$ under the so-called positive eigenvalue condition in addition to a traditional compatibility condition. 
We also apply those results to the logistic regression and the quantile regression.

Furthermore, we have performed a numerical simulation analysis (logistic and least absolute deviation regression) to evaluate the performance of sign-constrained regression by comparing performance with the cross-validated Lasso under the Toeplitz design. 
Overall, we have confirmed the followings: (1) When the correlation parameter $\rho$ in the Toeplitz design is large, sign-constrained regression performs well. (2) In the case of the least absolute deviation regression with standard Laplace distribution errors, sign-constrained regression typically performs better than the cross-validated Lasso.

\section*{Acknowledgements}

Yuta Koike's research was supported by JST CREST and JSPS Grant-in-Aid for Young Scientists (B) Grant Number JP16K17105.
Yuta Tanoue's research was supported by JSPS Grant-in-Aid for Research Activity start-up Grant Number 17H07322.

{\small
\renewcommand*{\baselinestretch}{1}\selectfont
\addcontentsline{toc}{section}{References}

\begin{thebibliography}{29}
\expandafter\ifx\csname natexlab\endcsname\relax\def\natexlab#1{#1}\fi

\bibitem[{Belloni \emph{et~al.}(2011)Belloni, Chernozhukov and
  Wang}]{belloni2011square}
Belloni, A., Chernozhukov, V. and Wang, L. (2011).
\newblock Square-root lasso: pivotal recovery of sparse signals via conic
  programming.
\newblock \emph{Biometrika} \textbf{98}, 791--806.

\bibitem[{Bickel \emph{et~al.}(2009)Bickel, Ritov and Tsybakov}]{BRT2009}
Bickel, P.~J., Ritov, Y. and Tsybakov, A.~B. (2009).
\newblock Simultaneous analysis of lasso and {D}antzig selector.
\newblock \emph{Ann. Statist.} \textbf{37}, 1705--1732.

\bibitem[{Bruckstein \emph{et~al.}(2008)Bruckstein, Elad and
  Zibulevsky}]{bruckstein2008uniqueness}
Bruckstein, A.~M., Elad, M. and Zibulevsky, M. (2008).
\newblock On the uniqueness of nonnegative sparse solutions to underdetermined
  systems of equations.
\newblock \emph{IEEE Transactions on Information Theory} \textbf{54},
  4813--4820.

\bibitem[{B{\"u}hlmann and {v}an~de Geer(2011)}]{BvdG2011}
B{\"u}hlmann, P. and {v}an~de Geer, S. (2011).
\newblock \emph{Statistics for high-dimensional data}.
\newblock Springer.

\bibitem[{Caner and Kock(2016)}]{CK2016}
Caner, M. and Kock, A.~B. (2016).
\newblock Oracle inequalities for convex loss functions with nonlinear targets.
\newblock \emph{Econometric Rev.} \textbf{35}, 1377--1411.

\bibitem[{Donoho and Tanner(2010)}]{donoho2010counting}
Donoho, D.~L. and Tanner, J. (2010).
\newblock Counting the faces of randomly-projected hypercubes and orthants,
  with applications.
\newblock \emph{Discrete \& computational geometry} \textbf{43}, 522--541.

\bibitem[{Fan and Li(2001)}]{fan2001variable}
Fan, J. and Li, R. (2001).
\newblock Variable selection via nonconcave penalized likelihood and its oracle
  properties.
\newblock \emph{J. Amer. Statist. Assoc.} \textbf{96}, 1348--1360.

\bibitem[{Frank and Friedman(1993)}]{frank1993statistical}
Frank, L.~E. and Friedman, J.~H. (1993).
\newblock A statistical view of some chemometrics regression tools.
\newblock \emph{Technometrics} \textbf{35}, 109--135.

\bibitem[{Judge and Takayama(1966)}]{JT1966}
Judge, G.~G. and Takayama, T. (1966).
\newblock Inequality restrictions in regression analysis.
\newblock \emph{J. Amer. Statist. Assoc.} \textbf{61}, 166--181.

\bibitem[{Koltchinskii(2009)}]{Koltchinskii2009}
Koltchinskii, V. (2009).
\newblock Sparsity in penalized empirical risk minimization.
\newblock \emph{Ann. Inst. Henri Poincar\'{e} Probab. Stat.} \textbf{45},
  7--57.

\bibitem[{Lawson and Hanson(1974)}]{lawson1974solving}
Lawson, C. and Hanson, R. (1974).
\newblock \emph{Solving least squares problems}.
\newblock Classics in Applied Mathematics. Society for Industrial and Applied
  Mathematics.

\bibitem[{Le~Cam(1970)}]{LeCam1970}
Le~Cam, L. (1970).
\newblock On the assumptions used to prove asymptotic normality of maximum
  likelihood estimates.
\newblock \emph{Ann. Math. Statist.} \textbf{41}, 802--828.

\bibitem[{Liew(1976)}]{Liew1976}
Liew, C.~K. (1976).
\newblock Inequality constrained least-squares estimation.
\newblock \emph{J. Amer. Statist. Assoc.} \textbf{71}, 746--751.

\bibitem[{Meinshausen(2013)}]{Meinshausen2013}
Meinshausen, N. (2013).
\newblock Sign-constrained least squares estimation for high-dimensional
  regression.
\newblock \emph{Electron. J. Stat.} \textbf{7}, 1607--1631.

\bibitem[{Ninomiya and Kawano(2016)}]{NK2016}
Ninomiya, Y. and Kawano, S. (2016).
\newblock {AIC} for the {L}asso in generalized linear models.
\newblock \emph{Electron. J. Stat.} \textbf{10}, 2537--2560.

\bibitem[{Shapiro(1989)}]{Shapiro1989}
Shapiro, A. (1989).
\newblock Asymptotic properties of statistical estimators in stochastic
  programming.
\newblock \emph{Ann. Statist.} \textbf{17}, 841--858.

\bibitem[{Silvapulle and Sen(2005)}]{SS2005}
Silvapulle, M.~J. and Sen, P.~K. (2005).
\newblock \emph{Constrained statistical inference}.
\newblock Wiley.

\bibitem[{Slawski and Hein(2013)}]{SH2013}
Slawski, M. and Hein, M. (2013).
\newblock Non-negative least squares for high-dimensional linear models:
  Consistency and sparse recovery without regularization.
\newblock \emph{Electron. J. Stat.} \textbf{7}, 3004--3056.

\bibitem[{Tibshirani(1996)}]{tibshirani1996regression}
Tibshirani, R. (1996).
\newblock Regression shrinkage and selection via the lasso.
\newblock \emph{J. R. Stat. Soc. Ser. B Stat. Methodol.} \textbf{58}, 267--288.

\bibitem[{Umezu \emph{et~al.}(2015)Umezu, Shimizu, Masuda and
  Ninomiya}]{USMN2015}
Umezu, Y., Shimizu, Y., Masuda, H. and Ninomiya, Y. (2015).
\newblock {AIC} for non-concave penalized likelihood method.
\newblock Working paper. Avairable at arXiv: https://arxiv.org/abs/1509.01688.

\bibitem[{{v}an~de Geer(2007)}]{vdG2007}
{v}an~de Geer, S. (2007).
\newblock The deterministic {L}asso.
\newblock Research Report 140, ETH.

\bibitem[{{v}an~de Geer(2008)}]{vdG2008}
{v}an~de Geer, S. (2008).
\newblock High-dimensional generalized linear models and the lasso.
\newblock \emph{Ann. Statist.} \textbf{36}, 614--645.

\bibitem[{{v}an~de Geer and B{\"u}hlmann(2009)}]{vdGB2009}
{v}an~de Geer, S. and B{\"u}hlmann, P. (2009).
\newblock On the conditions used to prove oracle results for the {L}asso.
\newblock \emph{Electron. J. Stat.} \textbf{3}, 1360--1392.

\bibitem[{{v}an~de Geer and M{\"u}ller(2012)}]{vdGM2012}
{v}an~de Geer, S. and M{\"u}ller, P. (2012).
\newblock Quasi-likelihood and/or robust estimation in high dimensions.
\newblock \emph{Statist. Sci.} \textbf{27}, 469--480.

\bibitem[{Wang and Tang(2009)}]{wang2009conditions}
Wang, M. and Tang, A. (2009).
\newblock Conditions for a unique non-negative solution to an underdetermined
  system.
\newblock In \emph{Communication, control, and computing, 2009. allerton 2009.
  47th annual allerton conference on}. IEEE, pp. 301--307.

\bibitem[{Wang \emph{et~al.}(2011)Wang, Xu and Tang}]{wang2011unique}
Wang, M., Xu, W. and Tang, A. (2011).
\newblock A unique nonnegative solution to an underdetermined system: From
  vectors to matrices.
\newblock \emph{IEEE Trans. Signal Process.} \textbf{59}, 1007--1016.

\bibitem[{Yuan and Lin(2006)}]{yuan2006model}
Yuan, M. and Lin, Y. (2006).
\newblock Model selection and estimation in regression with grouped variables.
\newblock \emph{J. R. Stat. Soc. Ser. B Stat. Methodol.} \textbf{68}, 49--67.

\bibitem[{Zhang(2010)}]{zhang2010nearly}
Zhang, C.-H. (2010).
\newblock Nearly unbiased variable selection under minimax concave penalty.
\newblock \emph{Ann. Statist.} \textbf{38}, 894--942.

\bibitem[{Zou(2006)}]{zou2006adaptive}
Zou, H. (2006).
\newblock The adaptive lasso and its oracle properties.
\newblock \emph{J. Amer. Statist. Assoc.} \textbf{101}, 1418--1429.

\end{thebibliography}

}

\end{document}